\newtheorem{theorem}{Theorem}
\newtheorem{lemma}[theorem]{Lemma}
\newtheorem{conjecture}[theorem]{Conjecture}
\newtheorem{letterthm}{Theorem}
\theoremstyle{definition}
\newtheorem{definition}[theorem]{Definition}
\theoremstyle{remark}
\newtheorem{remark}[theorem]{Remark}
\newcommand{\curlyP}{\mathcal{P}}
\title{Alternating quotients of free groups}
\author{Henry Wilton\footnote{Partially supported by NSF grant number DMS-0906276 and by an EPSRC Career Acceleration Fellowship.}}
\begin{document}

\maketitle

\begin{abstract}
We strengthen Marshall Hall's Theorem to show that free groups are locally extended residually alternating.  Let $F$ be any free group of rank at least two, let $H$ be a finitely generated subgroup of infinite index in $F$ and let $\{\gamma_1,\ldots,\gamma_n\}\subseteq F\smallsetminus H$ be a finite subset. Then there is a surjection $f$ from $F$ to a finite alternating group such that $f(\gamma_i)\notin f(H)$ for any $i$.  The techniques of this paper can also provide symmetric quotients. 
\end{abstract}

\begin{definition}\label{d: Residually finite}
A group $\Gamma$ is \emph{residually $\curlyP$} (for some class of groups $\curlyP$) if, for any $\gamma\in\Gamma\smallsetminus 1$, there is a surjection $f:\Gamma\to P\in\curlyP$ with $f(\gamma)\neq 1$.
\end{definition}

Many groups are known to be residually finite, and it is natural to ask whether one can restrict attention to smaller classes of finite groups.  Katz and Magnus proved that free groups are residually alternating, hence residually simple \cite{katz_residual_1968} (see also \cite{pride_residual_1972}, \cite{wiegold_free_1977} and \cite{weigel_residual_1993} \emph{et seq.}); in the topological context, Long and Reid showed that many hyperbolic 3-manifold groups are residually $PSL_2(\mathbb{F}_p)$ \cite{long_simple_1998}. 
   
One of the most obvious generalisations of Definition \ref{d: Residually finite} replaces the trivial subgroup with an arbitrary finitely generated subgroup $H$.  (In many cases, it is too much to expect infinitely generated subgroups to satisfy this property---see Remark \ref{r: ERF}.)  This consideration leads to the following definition.  

\begin{definition}
A group $\Gamma$ is said to be \emph{locally extended residually finite} (\emph{LERF}, also often called \emph{subgroup separable}) if, for any finitely generated subgroup $H\subseteq \Gamma$ and any $\gamma\in\Gamma\smallsetminus H$, there is a surjection $f$ from $\Gamma$ onto a finite group such that $f(\gamma)\notin f(H)$.
\end{definition}

\begin{remark}\label{r: ERF}
Without the requirement that the subgroup $H$ be finitely generated, such a group $\Gamma$ is called \emph{extended residually finite} or \emph{ERF}.  It is well known that the \emph{$(2,3)$-Bausmlag--Solitar group}
\[
BS(2,3)=\langle a,b\mid b^{-1}a^{-2}ba^3\rangle
\]
is not residually finite.  By considering the kernel of the natural map $F_2\cong\langle a,b\rangle\to BS(2,3)$, it follows easily that  non-abelian free groups are not ERF.
\end{remark}

Marshall Hall Jr proved that free groups are locally extended residually finite \cite{hall_jr_subgroups_1949}.  His theorem has been reinterpreted topologically and generalised to much larger classes of groups \cite{scott_subgroups_1978,stallings_topology_1983,wilton_halls_2008,haglund_finite_2008}.  However, to the best of the author's knowledge, no results have been proved that restrict the finite quotients that arise to a more specific class.  The aim of this note is to begin to fill this gap by proving that free groups are what one might call locally extended residually alternating.

\begin{letterthm}\label{t: Alternating quotients}
Let $F$ be a free group of rank greater than one.  Let $H$ be a finitely generated subgroup of infinite index in $F$ and let $\{\gamma_1,\ldots,\gamma_n\}$ be a finite subset of $F\smallsetminus H$.  There is a surjection $f$ from $F$ onto some finite alternating group $A_k$ such that $f(\gamma_i)\notin f(H)$ for all $i$.
\end{letterthm}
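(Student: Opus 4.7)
The plan is to begin with the Stallings--Hall construction and then enlarge the resulting cover so that its monodromy representation surjects onto a full alternating group. Let $R$ be the rose with $\pi_1(R)\cong F$, and let $X_H$ be the Stallings core graph of $H$, a finite graph immersing into $R$ with basepoint $v_0$. Since each $\gamma_i\notin H$, the word $\gamma_i$ read as an edge-path from $v_0$ in $X_H$ either fails to lift or terminates at a vertex other than $v_0$. Following Hall, extend $X_H$ to a finite graph $Y_0$ that is a genuine cover of $R$; the monodromy $\rho_0\colon F\to\mathrm{Sym}(V(Y_0))$ then satisfies $\rho_0(H)\subseteq\mathrm{Stab}(v_0)$ and $\rho_0(\gamma_i)(v_0)\neq v_0$ for each $i$.

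The task is to enlarge $Y_0$ to a cover $Y$ of degree $k$ for which: (a) the separation $\rho(\gamma_i)(v_0)\neq v_0$ persists; (b) each free generator of $F$ acts on $V(Y)$ by an \emph{even} permutation; and (c) $\rho(F)\leq\mathrm{Sym}(V(Y))$ equals $A_k$. Condition (b) can be arranged generator by generator: if $\rho(a)$ is an odd permutation, adjoin two new vertices swapped by $a$ (a disjoint transposition in the $a$-labelled subgraph) and tie them into the rest of $Y$ by the other letters away from the basepoint region, flipping the parity of $\rho(a)$ without disturbing (a). For (c), appeal to the classical theorem of Jordan: a primitive subgroup of $\mathrm{Sym}(k)$ containing a $3$-cycle is either $A_k$ or $S_k$; combined with (b) this forces $\rho(F)=A_k$. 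Thus it suffices to arrange that $\rho(F)$ is primitive and contains an explicit $3$-cycle, for instance one visibly supported on three newly adjoined vertices.

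The main obstacle is engineering primitivity: ruling out every nontrivial partition of $V(Y)$ preserved by all generators. Because $[F:H]=\infty$, I may take $k$ arbitrarily large, which gives me considerable room to attach three species of ``gadget'' subgraph away from the basepoint region -- one producing a visible $3$-cycle, one correcting parities as above, and one destroying every candidate block system (e.g.\ by introducing an asymmetric pair of vertices whose $a$- and $b$-orbits force any putative block to be a singleton or all of $V(Y)$). A cleaner variant, worth trying first, is to aim directly for $2$-transitivity of $\rho(F)$, which automatically implies primitivity, by arranging $Y$ so that the diagonal action of $F$ on ordered pairs of distinct vertices is transitive. The delicate point is simultaneity: installing gadgets for $3$-cycles, parity correction, and block-breaking inside one connected cover of $R$ while keeping the basepoint component responsible for separating $H$ from each $\gamma_i$ intact.
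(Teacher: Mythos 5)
Your outline has the right shape---Hall completion, Jordan's theorem, parity correction---but the step you yourself flag as ``the main obstacle,'' primitivity, is exactly the step you do not carry out, and it is the crux of the argument. The paper resolves it with one clean device: pad the completed cover with a long chain $W_n$ of $\alpha_1$-edges on which every other generator acts trivially, sized so that the total number of vertices is \emph{prime}. A transitive action of prime degree is automatically primitive, since block sizes divide the degree, so no block-breaking gadgetry or $2$-transitivity engineering is needed at all. The same padding makes the minimal degree of the action small (the generator $\alpha_2$ moves only the boundedly many vertices outside $W_n$), which is the hypothesis in the quantitative form of Jordan's theorem the paper invokes; this sidesteps your harder requirement of exhibiting an element of $F$ whose image is \emph{exactly} a $3$-cycle --- something you cannot get from a single generator once the core graph $Z$ already forces that generator to move some of its vertices, and which you would otherwise have to manufacture from a longer word.

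A second, smaller problem: your generator-by-generator parity fix interferes with itself. Two new vertices swapped by $a$ and carrying only $a$-edges form a separate component, so you must ``tie them in'' with the other letters --- but then those letters no longer act trivially there, their own parities can change, and the degree changes (destroying primality if that is how you obtained primitivity), so the corrections are not independent. The paper's gadget $V_s$ is a single four-vertex graph, one for each sign vector $s\in\{\pm 1\}^r$, swapped in \emph{after} the ambient cover $Y'$ is built and its signs are read off; all the $V_s$ have the same number of vertices, so replacing one by another adjusts every generator's sign simultaneously without touching the degree, the connectivity, the primitivity, or the separation of the $\gamma_i$ from $H$. You would need a device with this simultaneity built in to close your argument.
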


A small modification of the argument gives symmetric, rather than alternating, quotients.

\begin{letterthm}\label{t: Symmetric quotients}
Let $F$ be a free group of rank greater than one.  Let $H$ be a finitely generated subgroup of infinite index in $F$ and let $\{\gamma_1,\ldots,\gamma_n\}$ be a finite subset of $F\smallsetminus H$.  There is a surjection $f$ from $F$ onto some finite symmetric group $S_k$ such that $f(\gamma_i)\notin f(H)$ for all $i$.
\end{letterthm}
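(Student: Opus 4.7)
\begin{pfof}{Theorem B}
The plan is to parallel the proof of Theorem A, modifying only the final step that forces the image to land in the alternating group. As in Theorem A, I would begin with the Stallings / Marshall Hall setup: form the finite core graph $X_H$ of $H$ inside the rose $R$ corresponding to a chosen free basis of $F$, and extend $X_H$ to a finite based cover $\tilde X \to R$ in which the lift of each $\gamma_i$ from the basepoint $\ast$ fails to close up (passing to a further finite cover if necessary to arrange this). The monodromy yields a permutation representation $\rho_0\colon F \to \mathrm{Sym}(V(\tilde X))$ with $\rho_0(H)$ contained in the stabiliser of $\ast$ and each $\rho_0(\gamma_i)$ outside it, so $\rho_0(\gamma_i) \notin \rho_0(H)$ for every $i$.

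Next, as in Theorem A, enlarge the vertex set by adjoining an auxiliary set $W$ of new symbols and extend each $\rho_0(x_j)$ to a permutation of $V := V(\tilde X) \sqcup W$, producing a new representation $\rho\colon F \to \mathrm{Sym}(V)$. The extensions are to be chosen so that (a) for every $h \in H$ and every $\gamma_i$, the computed actions $\rho(h) \cdot \ast$ and $\rho(\gamma_i) \cdot \ast$ agree with the corresponding actions under $\rho_0$---so that the separation property is preserved---and (b) the image $\rho(F)$ is the full symmetric group $S_k$, where $k := |V|$. In the proof of Theorem A, condition (b) is replaced by $\rho(F) = A_k$, which is forced by imposing that every $\rho(x_j)$ is an even permutation and that the image is a primitive subgroup of $\mathrm{Sym}(V)$ containing a $3$-cycle; by a classical theorem of Jordan this suffices. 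For Theorem B one simply drops the evenness condition and instead arranges that the image is primitive and contains a transposition, which by another classical theorem of Jordan forces $\rho(F) = S_k$. Choosing $|W|$ so that $k$ is prime (possible by Bertrand's postulate) makes transitivity automatically imply primitivity; a transposition can then be manufactured by including a single $2$-cycle on $W$ in the cycle decomposition of one of the extended generators.

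The main obstacle is the delicate combinatorial balancing of conditions (a) and (b). Condition (a) is most easily guaranteed by keeping $V(\tilde X)$ invariant under every $\rho(x_j)$, but this precludes transitivity of $\rho(F)$ on $V$ and hence contradicts (b). The resolution---already carried out in the proof of Theorem A---is to allow the extensions to mix $V(\tilde X)$ with $W$ along carefully chosen edges that do not affect the action of the specific words $h \in H$ or $\gamma_i$ on $\ast$. The passage from Theorem A to Theorem B thus requires only the essentially trivial substitution of the parity constraint by a transposition-in-the-image constraint; all the combinatorial heavy lifting has already been performed in the proof of Theorem A.
\end{pfof}
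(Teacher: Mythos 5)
Your overall architecture matches the paper's: core graph plus the lifts of the $\gamma_i$, completion to a finite cover containing it as a subgraph, a prime number of vertices to convert transitivity into primitivity, and an appeal to Jordan to identify the image, with Theorem B obtained from Theorem A by a one-step modification at the end. That much is right, and your remark that the separation property only requires the actions of $H$ and the $\gamma_i$ on the basepoint to be computed inside the original subgraph is exactly how the paper preserves it.

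However, there is a genuine gap in the step that identifies the image. You invoke the form of Jordan's theorem that requires the image to \emph{contain a transposition} (respectively, a $3$-cycle for Theorem A), and you propose to manufacture the transposition ``by including a single $2$-cycle on $W$ in the cycle decomposition of one of the extended generators.'' That does not put a transposition in the image: the generator in question is then the product of that $2$-cycle with all of its other cycles, and its action on the vertices of the original core graph is forced by the edges already present there, so you cannot make it a bare transposition. To extract the transposition as a power of this generator you would need every other cycle of that generator to have odd length, which you neither arrange nor mention; without that, $\rho(F)$ need not contain any transposition and the cited theorem does not apply. The paper sidesteps this entirely by using a different form of Jordan's theorem, the \emph{minimal degree} criterion: it attaches a long chain $W_n$ on which the generator $\alpha_2$ acts trivially, so that $\alpha_2$ moves at most $d+4$ of the $p$ vertices and the primitive action has minimal degree at most $J(p)$; Jordan then gives $A_p$ or $S_p$ with no need to exhibit any particular short cycle. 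The two cases are then separated purely by parity, using a four-vertex gadget $V_s$ that adjusts the sign of each generator: all generators even forces $A_p$, while flipping a single sign makes one generator odd, which rules out $A_p$ and forces $S_p$. Your proof would be repaired either by adopting this minimal-degree-plus-parity argument, or by genuinely constructing a transposition in the image, which requires controlling all the cycle lengths of one generator and is substantially more delicate than the sentence you devote to it.
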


\begin{remark}
In the case when $\curlyP$ is the class of all finite groups, it is equivalent to state the theorem for a single element $\gamma\notin F\smallsetminus H$ instead of for a finite subset $\{\gamma_1,\ldots,\gamma_n\}\subseteq\Gamma\smallsetminus H$: to deduce the latter from the former, simply take the product of the quotients across all $\gamma_i$.  This works because a product of finite groups is finite.  As a product of alternating groups is not alternating and a product of symmetric groups is not symmetric, we give the stronger statements.
\end{remark}
\begin{remark}
The hypothesis that $H$ is of infinite index in $F$ is necessary.  For instance, suppose that $H$ is a normal subgroup of finite index in $F$.  For any surjection $f:F\to A_n$ with $n\geq 5$, $f(H)$ is a normal subgroup of $A_n$ and is therefore either the whole of $A_n$ or trivial.  In the latter case, it follows that $F/H$ maps onto $A_n$.  But $F$ has many finite quotients that do not map onto $A_n$.
\end{remark}

Stallings interpreted Hall's original proof that free groups are LERF using the topology of graphs, reducing it to the topological fact that any immersion of finite graphs can be completed to a covering map \cite{stallings_topology_1983}.  Wiegold's proof that free groups are residually even alternating \cite{wiegold_free_1977} uses a classical theorem of Jordan, which asserts that if the minimal degree of a primitive permutation group is small enough, then that group must be symmetric or alternating \cite{jordan_theoremes_1871}.  The proofs of Theorems \ref{t: Alternating quotients} and \ref{t: Symmetric quotients} combine Jordan's theorem with the covering theory of graphs.  The key technical result is Lemma \ref{l: Alternating completion}.  In the proof, we show how to complete an immersion of a finite graph into the rose to a covering map in such a way that the resulting permutation action of $F$ on the vertices is primitive and satisfies Jordan's condition.  This proves that free groups are, so to speak, locally extended residually symmetric-or-alternating.  A small modification of this construction then forces the action to be alternating; a slightly different modification forces the action to be symmetric.

In order to apply Jordan's theorem, we must ensure that the action of $F$ on the vertices of the cover we construct is primitive.  In this paper we do so in the simplest possible way, by requiring the number of vertices of the cover to be prime.  Thus, we actually prove that free groups are locally extended residually alternating-of-prime-degree.  Alternatively, one could ensure that the action is primitive by forcing it to be 2-transitive.  This is possible, using a more complicated construction, of which we do not give the details here.  Via this more complicated construction, one can obtain different restrictions on the degrees of the alternating quotients.

After free groups, the fundamental groups of surfaces are the next examples to consider.

\begin{conjecture}
Let $\Sigma$ be a closed, orientable, hyperbolic surface and let $H$ be a finitely generated subgroup of infinite index in $\pi_1\Sigma$.   For any $\gamma\in\pi_1\Sigma\smallsetminus H$ there is a surjection $f$ from $\Gamma$ onto a finite alternating group with $f(\gamma)\notin f(H)$.
\end{conjecture}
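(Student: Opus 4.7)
The plan is to follow the strategy of Theorem \ref{t: Alternating quotients}, adapted to the richer structure of surface groups. The first step is to invoke Scott's theorem that surface groups are LERF to find a finite cover $\Sigma' \to \Sigma$ of some degree $k$ such that $H$ lifts to a subgroup of $\pi_1\Sigma'$ and $\gamma \notin \pi_1\Sigma'$. This gives a transitive permutation representation $\rho : \pi_1\Sigma \to S_k$ in which $H$ lies inside a point stabiliser and $\gamma$ does not. The remaining task is to upgrade $\rho$, possibly by passing to a further finite cover, so that its image is a finite alternating group.

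Following the author's approach, I would aim to construct $\rho$ of prime degree $p$ whose image is primitive and contains a short cycle satisfying Jordan's bound, and whose generators all act as even permutations. The key technical input needed is a surface-theoretic analogue of Lemma \ref{l: Alternating completion}: given a compact immersed subsurface with fundamental group $H$, one would complete it to a closed surface cover of $\Sigma$ whose deck permutations simultaneously (a) separate $\gamma$ from $H$, (b) have prime total degree, (c) include a $3$-cycle, and (d) are all even. Jordan's theorem would then identify the image as $A_p$.

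One promising route is to exploit a splitting of $\pi_1\Sigma$. Cutting $\Sigma$ along a non-separating simple closed curve expresses $\pi_1\Sigma$ as an HNN extension of a free group $F$ over infinite cyclic subgroups generated by the two boundary copies. One would apply Theorem \ref{t: Alternating quotients} to $F$ to obtain an alternating quotient $F \to A_p$ separating $H \cap F$ from the relevant projections of $\gamma$, and then attempt to extend this to a quotient of $\pi_1\Sigma$ by specifying the image of the stable letter. For the extension to exist, the alternating quotient of $F$ must be equivariant with respect to the conjugation imposed by the HNN structure, which requires a refinement of Lemma \ref{l: Alternating completion} with controlled images on specified cyclic subgroups.

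The main obstacle is precisely this equivariance: unlike in the free setting, where covers of the rose can be completed freely, any candidate cover of $\Sigma$ must respect the surface relator, or equivalently, the HNN conjugacy constraint. Forcing primitivity, the Jordan short-cycle condition, and even parity while simultaneously matching up the two boundary permutations appears to require genuinely new ideas beyond the techniques of this paper. This is consistent with the conjectural status of the statement, and suggests that a first reasonable test case would be surfaces of small genus, where the HNN structure is especially simple and the combinatorics of candidate covers can be analysed explicitly.
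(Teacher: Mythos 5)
The statement you are addressing is stated in the paper as a \emph{conjecture}: the paper offers no proof of it, and your proposal does not supply one either. What you have written is a strategy outline, and to your credit you correctly identify where it breaks down; but that means the gap is real and unclosed, so this cannot be accepted as a proof. Concretely: after cutting $\Sigma$ along a non-separating simple closed curve $c$, the group $\pi_1\Sigma$ is an HNN extension of a free group $F$ with stable letter $t$ conjugating one boundary element $u$ to the other, $v$. A homomorphism $F\to A_p$ extends over the HNN extension only if the images of $u$ and $v$ are conjugate in $A_p$ (indeed, conjugate by an element you are free to choose as the image of $t$, which must itself be even if the target is to remain $A_p$). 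Lemma \ref{l: Alternating completion} gives no control whatsoever over the cycle type of the image of a prescribed element of $F$: the completion adds edges to the graph $Z$ in an essentially arbitrary way subject to the covering condition, primality of the degree, and the parity correction via $V_s$. Without a version of that lemma that additionally prescribes (or at least matches) the conjugacy classes of the images of $u$ and $v$, the extension step simply cannot be carried out, and that refinement is exactly the ``genuinely new idea'' you concede is missing. Your first step, invoking Scott's theorem that surface groups are LERF to separate $\gamma$ from $H$ in some finite quotient, is fine, but it only reproduces the known LERF statement; the entire content of the conjecture is the upgrade to an alternating image, which remains open.

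A secondary caution: even if you could match the cycle types of $u$ and $v$, you would still need the combined permutation action of $\pi_1\Sigma$ on the $p$ points to remain transitive, primitive, and of small minimal degree after the identification, and you would need the parity bookkeeping of the $V_s$ gadget to survive the imposition of the relation $t u t^{-1}=v$. None of this is addressed. The honest conclusion of your write-up --- that the statement is consistent with its conjectural status --- is the correct one, but it should be stated as such rather than presented as a proof attempt.
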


In Section \ref{s: Permutations and coverings}, we recall the very well known facts that subgroups of free groups can be viewed in terms of either permutation representations or coverings of graphs, and observe that it is easy to pass from one point of view to the other.  In Section \ref{s: Hall's Theorem} we revisit Stallings's proof of Marshall Hall's theorem.  Finally, in Section \ref{s: Alternating quotients} we bring in Jordan's theorem and prove Theorems \ref{t: Alternating quotients} and \ref{t: Symmetric quotients}.

\section{Permutations and coverings of graphs}\label{s: Permutations and coverings}

Let $F$ be a free group of rank $r>1$, with generators $\{\alpha_1,\ldots,\alpha_r\}$.  It is convenient to take $r$ to be finite, although our results apply just as well to the infinite-rank case.  Let $H$ be a subgroup of finite index.  The action of $F$ by left multiplication on $F/H$ can be thought of as a homomorphism from $F$ to the symmetric group $\mathrm{Sym}(F/H)$.   Of course, $F$ acts transitively, and the subgroup $H$ can be recovered as the stabiliser of the trivial coset.   We summarise this paragraph as follows.

\begin{remark}\label{r: permutations}
Subgroups of $F$ of index $d$ correspond canonically to transitive actions of $F$ on pointed sets of cardinality $d$.
\end{remark}

We now switch to a topological point of view.  Let $X$ be a rose with $r$ petals, that is, a graph with precisely one vertex $x_0$ and $r$ edges. (All our results can be generalised to the situation in which $X$ is an arbitrary finite graph.  However, the case in which $X$ is a rose is sufficient for our purposes.) We fix once and for all an isomorphism $F\cong\pi_1(X,{x_0})$ by orienting each edge of $X$ and labelling it with a generator of $F$.

By standard covering space theory, a subgroup $H$ of $F$ corresponds to a connected, pointed covering space $(Y,y_0)\to (X,x_0)$ with the covering map inducing an isomorphism $\pi_1(Y,y_0)\cong H$.

The orientation and labelling of the edges of $X$ pulls back to an orientation and labelling of the edges of $Y$.  Conversely, any orientation and labelling of the edges of $Y$ determines a combinatorial map $Y\to X$ that sends vertices to vertices and edges to edges.  We will only consider such maps, and we will usually think of them in terms of the corresponding labelled and oriented graph $Y$.

Covering maps to $X$ are easily characterised in terms of the labelling and orientation on $Y$.

\begin{definition}
A labelling and orientation on a graph $Y$ are said to satisfy the \emph{covering condition} if, for each vertex $y$ of $Y$ and each label $\alpha_i$, there is exactly one incoming and one outgoing edge labelled $\alpha_i$ at $y$.
\end{definition}

The proof of the following lemma is a simple exercise.

\begin{lemma}
A combinatorial map of non-empty, connected graphs $Y\to X$ is a covering map if and only if it is bijective on the links of vertices.  Equivalently, $Y\to X$ is a covering map if and only if the corresponding labelling and orientation on the graph $Y$ satisfy the covering condition.
\end{lemma}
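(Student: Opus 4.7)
The plan is to verify both equivalences by unwinding definitions. For the first equivalence, I would observe that any combinatorial map $p\colon Y\to X$ is already a homeomorphism on the interior of each edge, so the covering property can only fail at a vertex. A small open neighborhood of a vertex $y\in Y$ is topologically the open cone on the link of $y$, and similarly for $p(y)\in X$; the map $p$ respects this cone structure, so $p$ restricts to a homeomorphism of such neighborhoods if and only if the induced link map is a bijection. Any combinatorial map that is a local homeomorphism everywhere is then automatically a covering map in the classical sense---evenly covered neighborhoods of a point $x\in X$ are obtained by taking the union of cone neighborhoods around the preimages of $x$ and intersecting with their common image. This yields the first equivalence.

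For the second equivalence, I would use that $X$ is the rose with unique vertex $x_0$ whose link consists of exactly $2r$ half-edges: for each generator $\alpha_i$, one outgoing half-edge and one incoming half-edge. A combinatorial map $Y\to X$ sends each half-edge incident at a vertex $y\in Y$ to the half-edge of $X$ of matching label and orientation. The induced link map at $y$ is therefore a bijection precisely when, for every $i$, there is exactly one outgoing and exactly one incoming half-edge at $y$ labelled $\alpha_i$---that is, exactly when the covering condition holds.

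No substantive obstacle arises; the argument is pure definition-chasing. The only care needed is to work with half-edges (the true elements of the link) rather than edges, so that a loop at $y$ contributes two half-edges rather than one, and to note that under the covering condition the outgoing and incoming edges at $y$ labelled $\alpha_i$ need not be distinct as edges---they may form a loop---while nonetheless contributing two distinct half-edges to the link, consistent with bijectivity.
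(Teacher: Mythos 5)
Your proof is correct. The paper offers no proof of this lemma---it is explicitly left as ``a simple exercise''---and your definition-chasing argument via half-edges and links is precisely the intended one; your attention to the fact that a loop at a vertex contributes two half-edges to the link (so that the outgoing and incoming $\alpha_i$-edges required by the covering condition may coincide as edges) is exactly the point where the exercise could otherwise go wrong.
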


When a covering map is restricted to a subgraph, we obtain an immersion.

\begin{definition}
A combinatorial map $Y\to X$ is an \emph{immersion} if it is injective on links of vertices.
\end{definition}

Again, this can be characterised in terms of the labelling and orientation on $Y$.

\begin{definition}
A labelling and orientation on a graph $Y$ are said to satisfy the \emph{immersion condition} if, for each vertex $y$ of $Y$ and each label $\alpha_i$, there is at most one incoming and one outgoing edge labelled $\alpha_i$ at $y$.
\end{definition}

Just as before, the proof of the following lemma is an easy exercise.

\begin{lemma}
A combinatorial map of non-empty, connected graphs $Y\to X$ is an immersion if and only if the corresponding labelling and orientation on the graph $Y$ satisfy the immersion condition.
\end{lemma}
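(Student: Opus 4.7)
The plan is to unwind both conditions at a single vertex and observe that they say the same thing. The map $Y \to X$ being combinatorial means it sends vertices to vertices and edges to edges, and the label/orientation data on $Y$ is defined so that a $Y$-edge with label $\alpha_i$ maps to the $\alpha_i$-petal of $X$ preserving orientation. Since $X$ is the rose with a single vertex $x_0$, the link of $x_0$ in $X$ consists of exactly $2r$ half-edges, one incoming and one outgoing half-edge for each generator $\alpha_i$.

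Fix a vertex $y$ of $Y$. The link of $y$ maps to the link of $x_0$, and each half-edge at $y$ is sent to the half-edge at $x_0$ determined by its label and its orientation relative to $y$ (incoming versus outgoing). Thus the restriction of the link map is injective at $y$ if and only if no two distinct half-edges at $y$ carry the same label with the same orientation, which is precisely the statement that for every label $\alpha_i$ there is at most one incoming and at most one outgoing $\alpha_i$-edge at $y$. This is the immersion condition at $y$.

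Running this equivalence over every vertex $y$ of $Y$ gives the lemma. I would carry it out by fixing $y$, explicitly writing the induced map on links using the labelling, and reading off the equivalence in one line; the forward direction is contrapositive (two incoming $\alpha_i$-edges collapse in the link), and the reverse direction is immediate from the definition of the link map.

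There is essentially no obstacle here: the only mildly fiddly point is being careful about what ``link'' means at a vertex of valence one or at a loop, but even at a loop based at $y$ the two half-edges contribute distinct points of the link, so the argument is unchanged. The proof is therefore a direct translation between the two formulations, exactly as the excerpt promises.
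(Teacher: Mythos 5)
Your proof is correct and is exactly the direct unwinding of definitions that the paper intends (the paper leaves this as an easy exercise): half-edges at a vertex of $Y$ are classified by their label and orientation, these classes index the $2r$ points of the link of the rose's vertex, and injectivity on links is precisely the immersion condition. Nothing further is needed.
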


The immersion condition and the covering condition are illustrated in Figure \ref{fig: conditions}.

\begin{figure}%[h]
\centering
\includegraphics[width=3.75in]{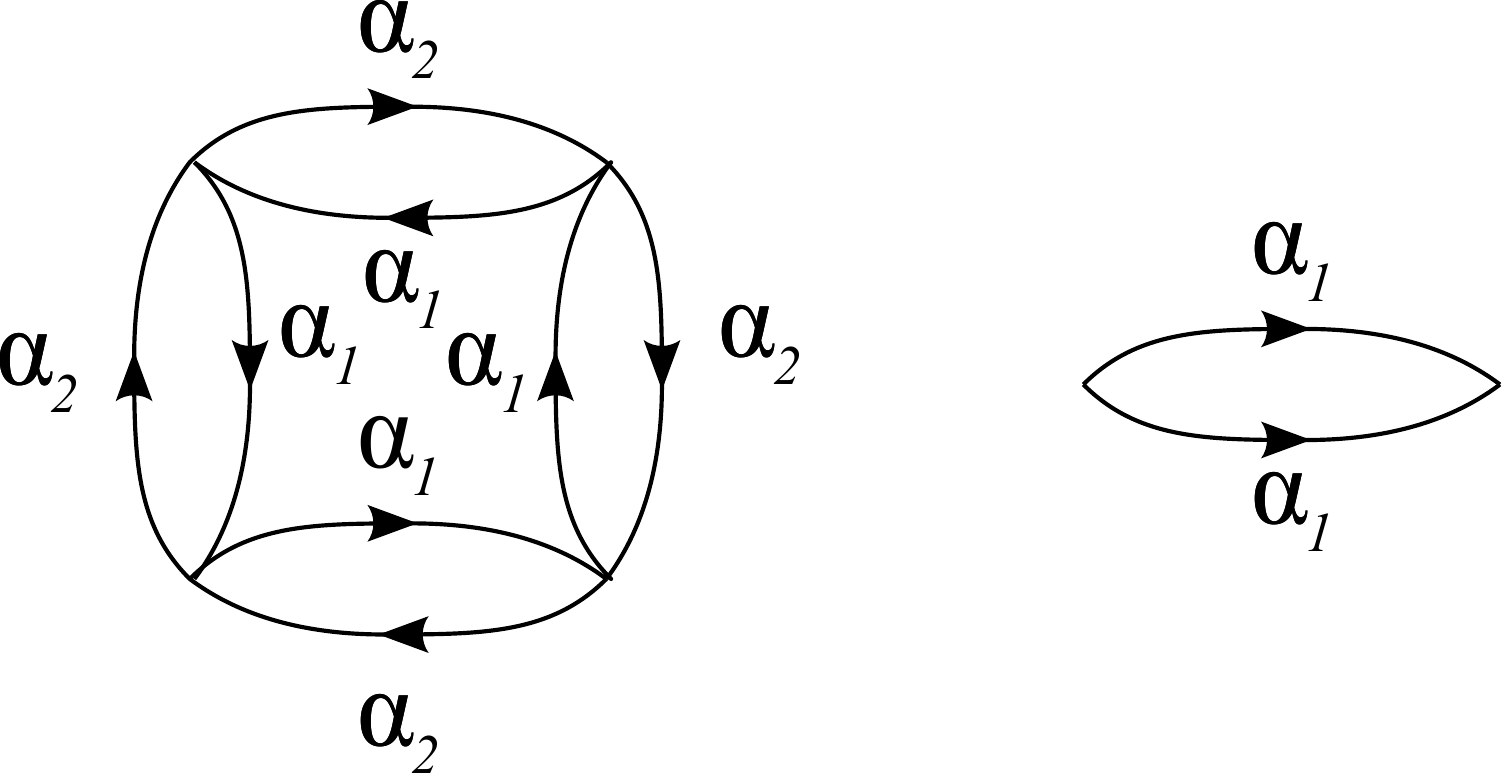}
\caption{The graph on the left satisfies the covering condition. The graph on the right does not satisfy the immersion condition.}
\label{fig: conditions}
\end{figure}

It follows from standard covering space theory that $F$ acts on the vertex set of $Y$ by path lifting.  To be precise, any element $\gamma\in F$ can be thought of as a loop in $X$, represented by a continuous map $\gamma:[0,1]\to X$ with $\gamma(0)=\gamma(1)=x_0$.  Covering space theory asserts that, for each vertex $y\in Y$, there is a unique lift $\tilde{\gamma}:[0,1]\to Y$ with $\tilde{\gamma}(0)=y$.  The action of $\gamma$ on the vertices of $Y$ is defined by
\[
\gamma.y=\tilde{\gamma}(1)~.
\]
Furthermore, the map that sends $\gamma\mapsto \gamma.y_0$ induces an $F$-equivariant bijection between $F/H$ and the vertices of $Y$.  We can summarise this discussion in the following remark.

\begin{remark}\label{r: coverings}
Subgroups of $F$ correspond to connected, based, labelled, oriented graphs that satisfy the covering condition.  The index of the subgroup is equal to the number of vertices of the graph.
\end{remark}

It is apparent from Remarks \ref{r: permutations} and \ref{r: coverings} that connected, based, labelled, oriented graphs satisfying the covering condition are in bijection with transitive permutation actions of $F$ on based sets.  Indeed, given such a graph $Y$, as observed above the group $F$ acts by path lifting on the vertices.  The action of the generator $\alpha_i$ can be seen by restricting attention to the edges of $Y$ labelled by $\alpha_i$: by the covering condition, these edges form a union of topological circles; each circle corresponds to a cycle under the action of $\alpha_i$; taken together, these circles give the cycle type of $\alpha_i$.

Conversely, given an action of $F$ by permutations on a set $V$, we can build a covering graph $Y$.  Take $V$ to be the vertex set of $Y$.  There is an oriented edge labelled $\alpha_i$ from $u$ to $v$ if $\alpha_i.u=v$.  By construction, this graph $Y$ satisfies the covering condition.

\section{Marshall Hall's Theorem}\label{s: Hall's Theorem}

In this section, we recall Stallings's proof of Marshall Hall's Theorem \cite{stallings_topology_1983}.  The key observation is that one can complete any immersion to a covering map without increasing the number of vertices.

\begin{lemma}[Stallings \cite{stallings_topology_1983}]\label{l: Completion}
Let $Z$ be a finite graph, let $X$ be a rose and let $Z\to X$ be an immersion.  The immersion factors as $Z\hookrightarrow Y\to X$ where $Z$ is a subgraph of $Y$, every vertex of $Y$ is a vertex of $Z$, and $Y\to X$ is a covering map.
\end{lemma}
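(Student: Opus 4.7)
The plan is a direct combinatorial construction: we add edges to $Z$, without adding any new vertices, so as to repair every local failure of the covering condition while preserving the immersion condition. Since $Z \to X$ is an immersion, at every vertex $z$ of $Z$ and for every label $\alpha_i$, there is \emph{at most} one outgoing and at most one incoming edge labelled $\alpha_i$. To promote the immersion to a covering, we need to supply exactly one missing edge of each type at each vertex where one is absent.

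First I would fix a label $\alpha_i$ and consider the two sets
\[
U_i=\{z\in Z^{(0)}\mid z\text{ has no outgoing }\alpha_i\text{-edge}\},\quad V_i=\{z\in Z^{(0)}\mid z\text{ has no incoming }\alpha_i\text{-edge}\}.
\]
If $e_i$ denotes the number of $\alpha_i$-labelled edges of $Z$, then by the immersion condition the number of vertices which already possess an outgoing $\alpha_i$-edge is exactly $e_i$, and similarly for incoming edges. Since $Z$ has finitely many vertices, this gives $|U_i|=|V_i|=|Z^{(0)}|-e_i$. Hence we may choose any bijection $\sigma_i:U_i\to V_i$ and add, for each $u\in U_i$, a new oriented edge labelled $\alpha_i$ from $u$ to $\sigma_i(u)$.

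Doing this independently for each generator $\alpha_i$ produces a finite graph $Y$ containing $Z$ as a subgraph with the same vertex set. By construction, every vertex of $Y$ has exactly one outgoing and one incoming edge of each label, so the covering condition holds and the combinatorial map $Y\to X$ defined by the labelling is a covering map, as established earlier in the excerpt. If a pointed, connected covering is desired one simply restricts to the connected component containing the basepoint of $Z$, which still contains $Z$ because $Z$ is already a subgraph.

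The only genuinely substantive step is the counting equality $|U_i|=|V_i|$, which is essentially a handshake-style argument on the set of $\alpha_i$-edges of $Z$ and is forced by finiteness of $Z$. Everything else is bookkeeping: the immersion condition is preserved because at each vertex we add at most one missing outgoing and one missing incoming edge per label, and the covering condition is then obvious by design. I do not anticipate any serious obstacle.
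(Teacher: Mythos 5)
Your proof is correct and follows essentially the same route as the paper: the sets $U_i$ and $V_i$ are exactly the complements of the paper's $A^+(i)$ and $A^-(i)$, the edge-counting argument giving $|U_i|=|V_i|$ is the same, and the completion by an arbitrary bijection is identical. No gaps.
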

\begin{proof}
As above, we can think of $Z$ as an oriented, labelled graph that satisfies the immersion condition.  For each $i$, let $A^+(i)$ be the set of vertices of $Z$ that adjoin an outgoing edge labelled $\alpha_i$, and let $A^-(i)$ be the set of vertices of $Z$ that adjoin an incoming edge labelled $\alpha_i$.  Counting the edges labelled $\alpha_i$, we see that
\[
\#A^+(i)=\#A^-(i)
\]
whence
\[
\#(A^+(i))^c=\#(A^-(i))^c~.
\]
Choose any bijection between $(A^+(i))^c$ and $(A^-(i))^c$.  We can use this additional bijection to add new oriented edges labelled $\alpha_i$ to $Z$.  Let $Y$ be the result of carrying this out for each $i$.  By construction, $Y$ satisfies the covering condition.
\end{proof}

The proof is illustrated in Figure \ref{fig: Hall}.

\begin{figure}%[h]
\centering
\includegraphics[width=2in]{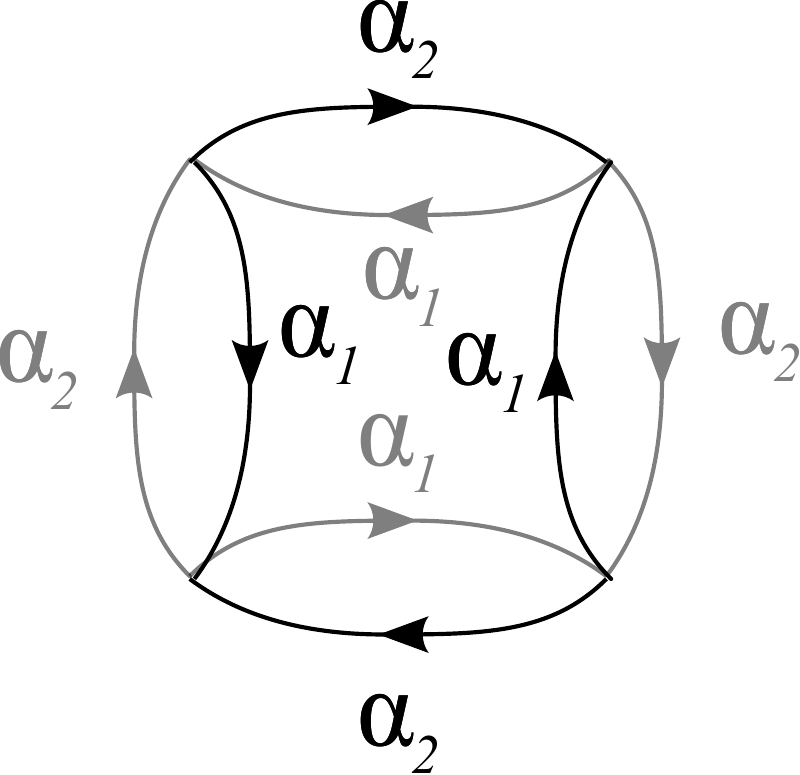}
\caption{The union of the black edges, $Z$, satisfies the immersion condition.  After adding the grey edges, the resulting graph $Y$ satisfies the covering condition.}
\label{fig: Hall}
\end{figure}

\begin{theorem}[Marshall Hall Jr \cite{hall_jr_subgroups_1949}]\label{t: Marshall Hall}
Let $F$ be a finitely generated free group, let $H\subseteq F$ be a finitely generated subgroup, and let $\gamma_1,\ldots,\gamma_n\in F\smallsetminus H$.  There is a homomorphism $f$ from $F$ to a finite group such that $f(\gamma_i)\notin f(H)$ for all $i$.
\end{theorem}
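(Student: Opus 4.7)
The plan is to exploit the equivalence between subgroups of $F$ and labelled, oriented graphs established in Section \ref{s: Permutations and coverings}, together with Stallings's completion Lemma \ref{l: Completion}.

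First, represent $H$ by its Stallings core: a finite, connected, based, labelled, oriented graph $(Z,z_0)$ satisfying the immersion condition, obtained by folding a wedge of circles realising a generating set of $H$. The immersion $Z\to X$ then induces an injection $\pi_1(Z,z_0)\into F$ with image exactly $H$, and $Z$ is finite because $H$ is finitely generated.

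Next, incorporate the $\gamma_i$. For each $i$, attempt to lift the edge-path representing $\gamma_i$ starting at $z_0$; the immersion condition makes this lift unique wherever it exists. If the lift completes in $Z$, it must terminate at a vertex $v_i\neq z_0$, since otherwise $\gamma_i$ would lie in $\pi_1(Z,z_0)=H$. If the lift fails at some proper initial segment, attach a fresh edge-path from the current endpoint, labelled to read off the remaining letters of $\gamma_i$; attaching only at the point of failure introduces no new immersion-condition violations. If the resulting endpoint happens to coincide with $z_0$, append one more new edge (with any free label) to separate it. Carrying this out for every $i$ produces a finite graph $(Z',z_0)$ satisfying the immersion condition, containing $Z$ as a subgraph, in which each $\gamma_i$ lifts from $z_0$ to a path ending at a vertex $v_i\neq z_0$.

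Finally, apply Lemma \ref{l: Completion} to extend $Z'\to X$ to a covering map $Y\to X$ with $Z'\subseteq Y$ and $V(Y)=V(Z')$. By Remark \ref{r: coverings}, this cover corresponds to a transitive action of $F$ on the finite set $V(Y)$, yielding a homomorphism $f:F\to\mathrm{Sym}(V(Y))$ to a finite group. Since $Z$ is a subgraph of $Y$ based at $y_0=z_0$, the stabiliser of $y_0$ under this action contains $H$, so every element of $f(H)$ fixes $y_0$. On the other hand, path-lifts already present in $Z'$ are unaffected by the completion, so $f(\gamma_i).y_0=v_i\neq y_0$ for each $i$, and therefore $f(\gamma_i)\notin f(H)$.

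The main obstacle is the middle step: arranging that $Z'$ both admits a full $\gamma_i$-lift from $z_0$ and sends that lift to a vertex distinct from $z_0$, all without breaking the immersion condition at any existing vertex. Once $Z'$ is in hand, the completion lemma and the dictionary of Remark \ref{r: coverings} do the rest automatically.
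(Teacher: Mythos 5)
Your proof is correct and follows essentially the same route as the paper: build a finite immersed graph containing a core carrying $H$ together with lifts of the $\gamma_i$, then complete it to a finite cover via Lemma \ref{l: Completion} and read off the permutation action (the paper obtains this graph as a finite subgraph of the cover $X'$ corresponding to $H$, which is the same thing as your folded core with whiskers attached). The one wrinkle is your contingency ``if the resulting endpoint coincides with $z_0$, append one more new edge'': that case is in fact vacuous---the attached whiskers are trees, so $\pi_1(Z',z_0)$ still maps isomorphically onto $H$ and a closed lift of $\gamma_i$ at $z_0$ would force $\gamma_i\in H$---and it is just as well, since appending an edge beyond the end of the lift would not change where the lift of the fixed word $\gamma_i$ terminates.
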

\begin{proof}
We identify $F$ with $\pi_1(X,x_0)$ for $X$ a suitable rose.  Let $(X',x'_0)\to (X,x_0)$ be the covering map corresponding to $H$.  Each $\gamma_i$ lifts at $x'_0$ to a path $\gamma'_i:[0,1]\to X'$; by assumption, $\gamma'_i(1)\neq x'_0$.  We now consider the topology of the graph $X'$.  Let $T\subseteq X'$ be a maximal tree.  Because $H$ is finitely generated, it follows from the Seifert--van Kampen Theorem that $X'\smallsetminus T$ is a finite union of edges $\epsilon'_1\cup\ldots\cup\epsilon'_k$.

For each $j$, let $\mu'_j$ be an edge path from $x'_0$ to the initial vertex of $\epsilon'_j$ and let $\nu'_j$ be an edge path from $x'_0$ to the terminal vertex of $\epsilon'_j$.  Let
\[
Z=\bigcup_{i=1}^n\gamma'_i\cup\bigcup_{j=1}^k (\mu'_j\cup\epsilon'_j\cup\nu'_j)~.
\]
That is, $Z$ is a finite, connected subgraph of $X'$ that contains the paths $\gamma'_i$ and that also carries the fundamental group of $X'$.  The labelling of $X'$ restricted to $Z$ satisfies the immersion condition, so we may apply Lemma \ref{l: Completion} to construct a finite-sheeted covering space $Y$.

Consider the action of $F$ on the vertices of $Y$.  Every element of $H$ is represented by a loop in $Z$.  Therefore, $H$ is contained in the stabiliser of $x'_0$.  On the other hand, $\gamma'_i(1)\neq x'_0$ for each $i$, and so $\gamma_i$ does not stabilise $x'_0$.
\end{proof}

\section{Alternating quotients}\label{s: Alternating quotients}

In this section, we show how to modify Stallings's construction to force the action of $F$ on the vertices of $Y$ to be alternating or symmetric.

Recall some basic terminology from the theory of symmetric group actions.  Consider a transitive action by a group $\Gamma$ on a finite set of order $n$, in other words a homomorphism $\Gamma\to S_n$ with transitive image; the integer $n$ is called the \emph{degree} of the action.   The smallest degree of a non-trivial cyclic subgroup of the image of $\Gamma$ is called the \emph{minimal degree} of the action.  The action is \emph{primitive} if it does not preserve any proper partition of the finite set.

\begin{remark}
Because the action is transitive, the cardinality of any partition preserved by $\Gamma$ divides $n$.  Therefore, if $n$ is prime then the action is primitive.
\end{remark}

We will use a classical theorem of Jordan, which gives a criterion for an action to be symmetric or alternating (in other words, for the homomorphism $\Gamma\to S_n$ to have image of index at most two) \cite{jordan_theoremes_1871,dieudonne_oeuvres_1961}.  See Theorem 3.3D of \cite{dixon_permutation_1996} for a modern treatment.

\begin{theorem}[Jordan]\label{t: Jordan's theorem}
There is a function $J:\mathbb{N}\to\mathbb{N}$ with the following properties:
\begin{enumerate}
\item $J(n)\to\infty$ as $n\to\infty$;
\item if $\Gamma\to S_n$ is primitive with minimal degree at most $J(n)$ then the image of $\Gamma$ is the symmetric group $S_n$ or the alternating group $A_n$.
\end{enumerate}
\end{theorem}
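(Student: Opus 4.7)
The plan is to reformulate the statement contrapositively: if $G \leq S_n$ is primitive with minimal degree $m$ and $G$ does not contain $A_n$, then $n$ is bounded above by some function $f(m)$ alone. The required function $J$ can then be taken to be any unbounded function satisfying $f(J(n)) < n$, so that $J(n)\to\infty$ automatically.

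My strategy rests on Jordan's elementary observation that a primitive subgroup of $S_n$ containing a $3$-cycle must contain $A_n$: the normal closure of the $3$-cycle is transitive by primitivity, then one upgrades to $2$-transitivity, and a standard conjugation argument shows that all $3$-cycles lie in the subgroup, hence $A_n$ does. With this reduction in hand, the task becomes producing an element of very small support — ideally a $3$-cycle — inside $G$ whenever $n$ is large compared with $m$.

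To do this, I would choose a non-identity $g\in G$ whose support $\Delta$ has size exactly $m$. Primitivity forbids $G$ from preserving any non-trivial partition, so there exists a conjugate $g^h$ whose support $\Delta^h$ overlaps $\Delta$ non-trivially but does not coincide with it. A careful analysis of the commutator $[g, g^h]$ — whose support lies in $\Delta \cup \Delta^h$ but which typically fixes many of those points because of cancellations at points $x\in\Delta\setminus\Delta^h$ with $g(x)\notin\Delta^h$ — shows that it is non-trivial yet moves strictly fewer than $m$ points, contradicting minimality of $m$, unless $n$ is so small that no such $h$ can exist. Translating the latter obstruction into a counting estimate yields the required inequality $n \leq f(m)$.

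The main obstacle is converting the qualitative primitivity argument into a sharp enough quantitative bound. One has to understand when the commutator genuinely reduces support, as opposed to vanishing for symmetry reasons, and control the number of useful conjugates of $g$; Wielandt's standard treatment of Jordan's theorem handles this by a double-counting of the orbits of the pointwise stabiliser of $\Delta^c$. A secondary subtlety is that if $g$ happens to be a single long cycle, the commutator recursion can stall, and one has to fall back on Jordan's cycle criterion — a primitive group containing a $p$-cycle with $p$ prime and $p \leq n-3$ contains $A_n$ — in order to close out the argument.
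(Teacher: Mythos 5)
The paper does not actually prove this statement: it is Jordan's classical 1871 theorem, quoted with references to Jordan's paper and to Theorem~3.3D of Dixon--Mortimer. So there is no in-paper argument to match yours against; what you have written is an attempted reconstruction of the classical proof, and its overall architecture is indeed the standard one (reduce to exhibiting an element of very small support, invoke Jordan's theorem that a primitive group containing a $3$-cycle contains $A_n$, and manufacture the small-support element as a commutator of conjugates of an element realising the minimal degree $m$).

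However, the central step as you state it is false, and it is exactly the step that carries the quantitative content of the theorem. You claim that if $\Delta^h$ overlaps $\Delta$ non-trivially without coinciding with it, then $[g,g^h]$ is non-trivial and moves \emph{strictly fewer} than $m$ points. Take $g=(1\,2)$ and $g^h=(2\,3)$: the supports overlap properly, yet the commutator is a $3$-cycle, whose support is strictly \emph{larger} than $m=2$. What the cancellation argument actually gives is a bound of the form $|\mathrm{supp}[g,g^h]|\leq 3\,|\Delta\cap\Delta^h|$, so to contradict minimality you need a conjugate whose support meets $\Delta$ in fewer than roughly $m/3$ points; producing such a conjugate, or showing that its non-existence forces $n\leq f(m)$ via the orbit/double-counting argument for the translates of $\Delta$, is the entire substance of Jordan's theorem. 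You explicitly defer that step to ``Wielandt's standard treatment,'' and you also do not justify the non-triviality of the commutator (which requires its own small lemma). As it stands the proposal is therefore a correct plan wrapped around a citation of the result it is meant to prove, rather than a proof. Given that the paper itself treats the theorem as a citable classical fact, the honest options are either to cite it as the paper does, or to carry out the intersection-size estimate and the counting of translates in full.
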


We can now provide the alternating analogue of Lemma \ref{l: Completion}.

\begin{figure}%[h]
\centering
\includegraphics[width=3.75in]{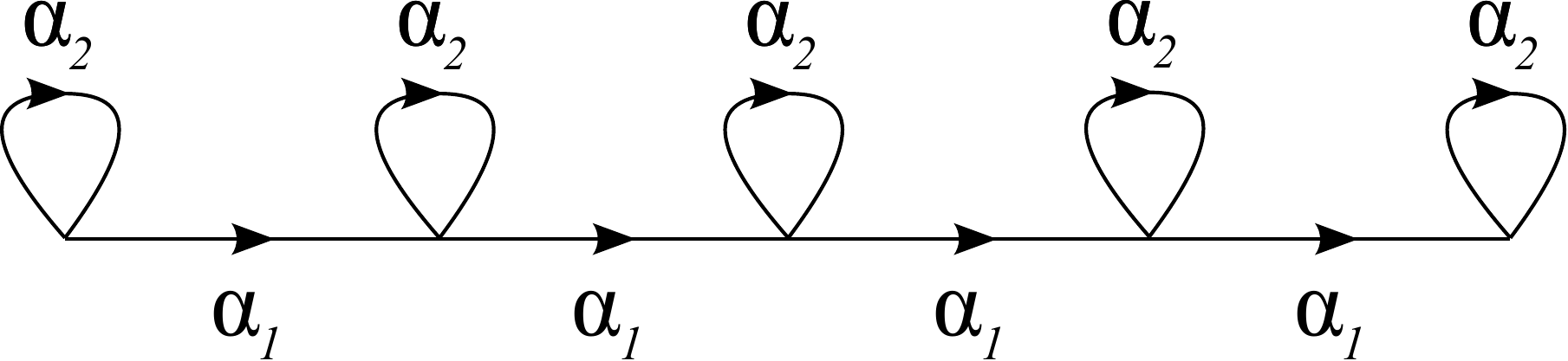}
\caption{The graph $W_4$.}
\label{fig: W_n}
\end{figure}

\begin{lemma}\label{l: Alternating completion}
Assume that $r\geq 2$.  Let $Z$ be a finite graph and let $X$ be a rose with $r$ petals.   Any immersion $Z\to X$ that is not a covering map factors as $Z\hookrightarrow Y\to X$ where $Y\to X$ is a finite-sheeted covering map and the action of $F\cong\pi_1(X)$ on the vertex set of $Y$ is alternating.
\end{lemma}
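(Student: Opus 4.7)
The strategy is to take Stallings's completion from Lemma \ref{l: Completion} as a first step and then modify the result so that the action of $F$ on the vertices of the resulting cover satisfies the hypothesis of Jordan's theorem (Theorem \ref{t: Jordan's theorem}) with image contained in $A_p$ for some prime $p$. Concretely, I aim to produce $Y$ so that $|V(Y)|=p$ is prime (giving primitivity), some generator has a cycle of length $3$ in its decomposition (giving minimal degree at most $3$), and every generator acts as an even permutation (confining the image to $A_p$).

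First I would apply Lemma \ref{l: Completion} to $Z \to X$ to produce a cover $Z \hookrightarrow Y_0 \to X$. Because $Z \to X$ is assumed not to be a covering, Stallings's construction must add at least one edge, which gives genuine freedom both in the ``pairing up'' step and in which edges of $Y_0 \setminus Z$ can later be broken without destroying the embedding of $Z$.

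Next I would enlarge $Y_0$ by gluing on copies of the widget $W_n$ of Figure \ref{fig: W_n} using edge-swap surgeries: if $e$ is an edge from $u$ to $v$ in $Y_0\setminus Z$ labeled $\alpha_i$ and $e'$ is an edge from $u'$ to $v'$ in a disjoint graph labeled $\alpha_i$, then replacing $e$ and $e'$ by edges $u\to v'$ and $u'\to v$ preserves the covering condition, connects the two components, and merges one cycle of $\alpha_i$ with another. The widget $W_n$ is to be designed so that (a) its vertex count varies with $n$, (b) under one designated generator, say $\alpha_1$, the surgery contributes a $3$-cycle in the cycle decomposition on $Y$, and (c) every generator's resulting cycle decomposition on $Y$ has the parity needed to lie in $A_p$. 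Using the density of primes, I would tune $n$ and the number of widgets attached so that $|V(Y)|=p$ for a prime $p$ as large as required. Since $p$ is prime, the action of $F$ on $V(Y)$ is automatically primitive; the $3$-cycle gives minimal degree at most $3$; all generators are even, so the image lies in $A_p$. For $p$ large enough we have $J(p)>3$, and Theorem \ref{t: Jordan's theorem} forces the image to be $S_p$ or $A_p$, hence exactly $A_p$.

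The hard part will be carrying out the parity bookkeeping in the widget step. The parity of each generator on $Y$ is determined by the total number of cycles it has, and the surgeries alter this in a way that couples the cycle structures of $Y_0$, of $W_n$, and of the particular edges chosen to swap. Designing $W_n$ so that the simultaneous constraints---prime vertex count, a $3$-cycle under $\alpha_1$, and even parity under every generator---can all be met for some admissible $n$ (and for every choice of initial completion $Y_0$) is where the combinatorial content of the argument lies; a direct application of Lemma \ref{l: Completion} plus a side calculation does not suffice.
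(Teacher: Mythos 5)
There is a genuine gap at the heart of your appeal to Jordan's theorem. The minimal degree of a permutation group is the smallest number of points \emph{moved} by a non-identity element, i.e.\ the minimal size of the support of a non-trivial element of the image. A generator that ``contributes a $3$-cycle in its cycle decomposition'' need not have support of size $3$: in your construction $\alpha_1$ is precisely the letter used to string the widgets together and to perform the edge-swap surgeries, so it will also have long cycles and its support will be nearly all of $V(Y)$. One short cycle somewhere bounds nothing. (One could try to salvage this by passing to a power of $\alpha_1$ that kills the other cycles, but that requires all other cycle lengths to be coprime to $3$, an extra constraint you do not impose or control.) The paper's construction avoids this by making a \emph{different} generator, $\alpha_2$, act as the identity on every vertex of $W_n$: then $\alpha_2$ moves at most $d+4$ points, where $d$ is the number of vertices of $Z$, and one chooses the prime $p=|V(Y)|$ so that $J(p)\geq d+4$. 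The bound on the minimal degree is not an absolute constant like $3$; it depends on $Z$, and the prime must be chosen accordingly using only the statement that $J(n)\to\infty$.

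The second gap is the parity bookkeeping, which you explicitly leave open but which is where the ``alternating'' conclusion actually comes from. The paper's solution is to decouple it from everything else: alongside $Z$ and $W_n$ one attaches a fixed four-vertex gadget $V_s$ depending on a sign vector $s\in\{\pm 1\}^{r}$, designed so that flipping $s_i$ changes the parity of $\alpha_i$ on $V_s$ (hence on $Y$) while leaving the covering condition, the vertex count, the connectivity and the small support of $\alpha_2$ untouched. One first builds the cover $Y'$ with the default gadget, reads off the sign of each $\alpha_i$ on $Y'$, and then swaps in the gadget $V_s$ that corrects all the parities simultaneously. Trying to absorb the parity constraints into $W_n$ and the surgeries, as you propose, couples them to the prime-counting, connectivity and minimal-degree constraints, and that is exactly the difficulty you acknowledge you cannot resolve; with the detachable gadget there is nothing left to resolve. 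So: replace the $3$-cycle idea with a generator of small total support, and add an independent parity-correcting component chosen after the fact.
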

\begin{proof}
As before, we think of $Z$ as a labelled, oriented graph.  Because the immersion $Z\to X$ is not a covering map, the labelling and orientation on the graph $Z$ do not satisfy the covering condition, so for some $i$ there are (not necessarily distinct) vertices $a,b$ of  $Z$ such that $a$ does not adjoin an outgoing edge labelled $\alpha_i$ and $b$ does not adjoin an incoming edge labelled $\alpha_i$.  Without loss of generality, we may take $i=1$.  By adding edges to $Z$ as in the proof of Lemma \ref{l: Completion}, we may assume that $a$ and $b$ are the only vertices that adjoin fewer than $2r$ half-edges.  Let $d$ be the number of vertices of $Z$.

Before we proceed with the details of the proof, we will give an outline.  The labelled, oriented graph $Y$ is constructed from $Z$ by attaching a large labelled, oriented graph $W_n$ on which $\alpha_2$ acts trivially.  We need to ensure that $Y$ has the following properties:
\begin{enumerate}
\item the labelling and orientation on $Y$ satisfy the covering condition;
\item the number of vertices of $Y$ is prime (which ensures that the action of $F$ on the vertices of $Y$ is primitive);
\item the generator $\alpha_2$ fixes a large number of the vertices of $Y$ (this ensures that the action of $F$ on the vertices of $Y$ has small minimal degree).
\end{enumerate}
At this point it will follow from Theorem \ref{t: Jordan's theorem} that the action of $F$ on the vertices of $Y$ is either alternating or symmetric.  In order to force the action to be alternating, we will also attach a carefully chosen labelled, oriented graph $V_s$.  A schematic diagram of the construction of $Y$ is given in Figure \ref{fig: Schematic}.

We will now give the details of the construction of $W_n$.  Take a graph homeomorphic to an interval with $n+1$ vertices, denoted $w_0,\ldots,w_n$.  Label each edge $\alpha_1$ and orient the edges consistently, so that the immersion condition is satisfied.  Attach an oriented edge labelled $\alpha_i$ from $w_j$ to itself for each $i\neq 1$ and for each $j$.  The result is a labelled, oriented graph $W_n$, with $n$ vertices, that satisfies the immersion condition. Only two vertices of $W_n$ adjoin fewer than $2r$ half edges: one does not adjoin an incoming edge labelled $\alpha_1$ and the other does not adjoin an outgoing edge labelled $\alpha_1$.  An example is shown in Figure \ref{fig: W_n}.

Next, consider any sequence $(s_i)\in\{\pm 1\}$ (for $1\leq i\leq r)$.  We will define a labelled, oriented graph $V_s$, with four vertices, denoted $v_1,v_2,v_3,v_4$.  By changing the sequence $(s_i)$, we will be able to change the signs of the permutations defined by the generators of $F$. The construction is as follows.  There is an oriented edge labelled $\alpha_1$ from $v_1$ to $v_2$; for each $i>2$, there are oriented loops of length one attached to $v_1$ and $v_2$.  For each $i\neq 2$, if $s_i=+1$ then there are oriented loops of length one labelled $\alpha_i$ attached to $v_3$ and $v_4$; if $s_i=-1$ then $v_3$ and $v_4$ are the two vertices of an oriented loop of length two labelled $\alpha_i$.  If $s_2=+1$ then $v_1$ and $v_3$ are the two vertices of an oriented loop of length two labelled $\alpha_2$, and likewise $v_2$ and $v_4$ are the two vertices of such a loop; if $s_2=-1$ then $v_1,v_2,v_3,v_4$ are the vertices of an oriented loop of length four labelled $\alpha_2$.  Two examples are shown in Figure \ref{fig: V_s}.

\begin{figure}%[h]
\centering
\includegraphics[width=3.75in]{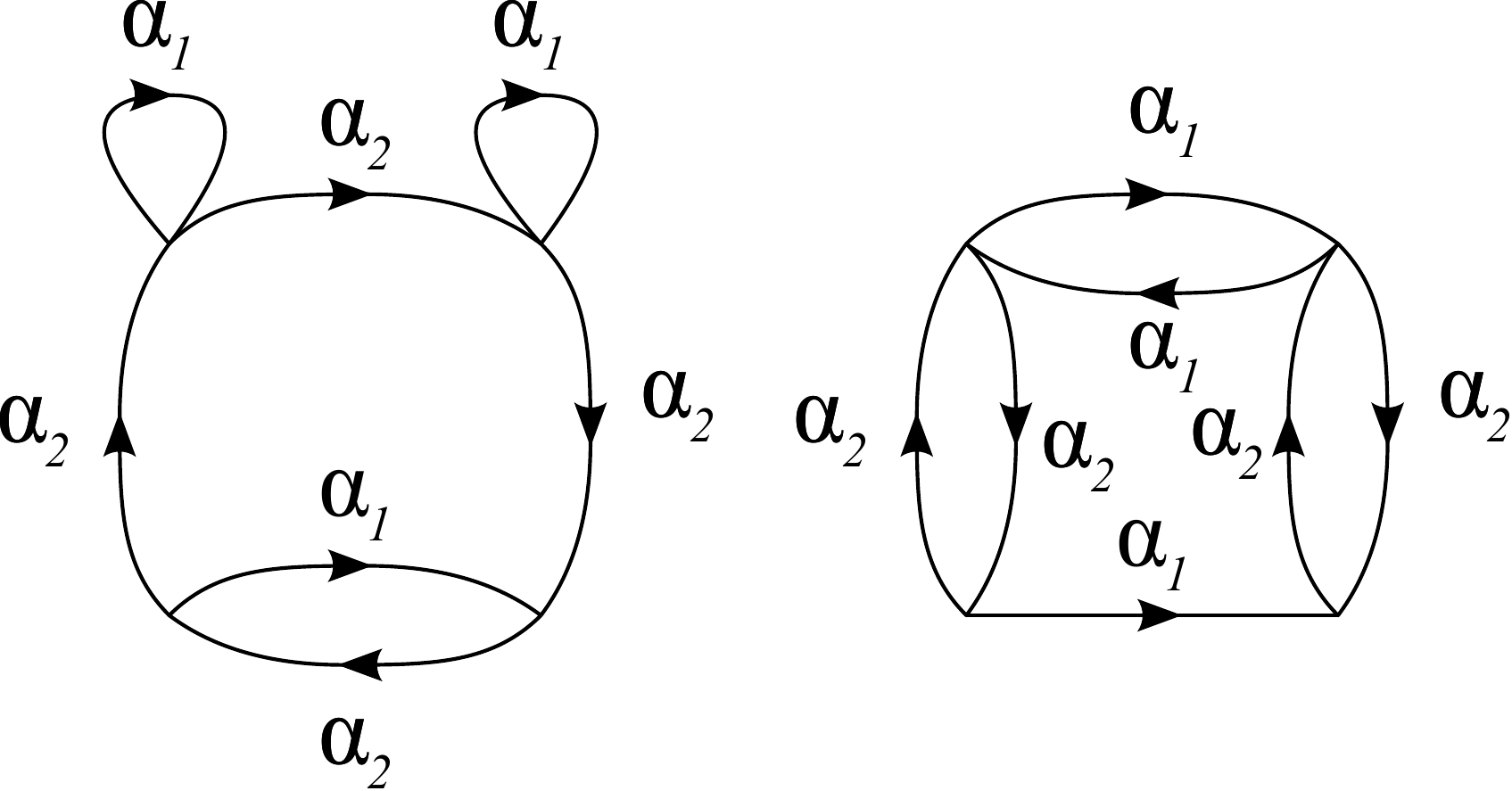}
\caption{The graph $V_{+1,-1}$ is shown on the left, and $V_{-1,+1}$ on the right.}
\label{fig: V_s}
\end{figure}

We are nearly ready to construct $Y$ from the pieces $Z$, $W_n$ and $V_s$.  However, we first construct a similar labelled, oriented graph $Y'$, in which we will not try to control the parity of the action of $F$.  Let $p$ be the smallest prime such that $J(p)\geq d+4$ and let $n=p-d-4$. Let $t_i=+1$ for $1\leq i\leq r$.  Construct $Y'$ from $V_t\sqcup Y\sqcup W_n$ by attaching four oriented edges labelled $\alpha_1$ in any way that makes the result connected and that satisfies the covering condition.  So $F$ acts on the vertex set of $Y'$, and we can read off the cycle types by looking at the edges. Because $Y'$ is connected, the action is transitive, and because the number of vertices $p$ is prime, it follows that the action is primitive.  By construction, $\alpha'_2.w_j=w_j$ for any $j$, and so we see that the minimal degree of the action is at most $d+4$.  It follows from Jordan's theorem that the action is alternating or symmetric.

\begin{figure}%[h]
\centering
\includegraphics[width=3.75in]{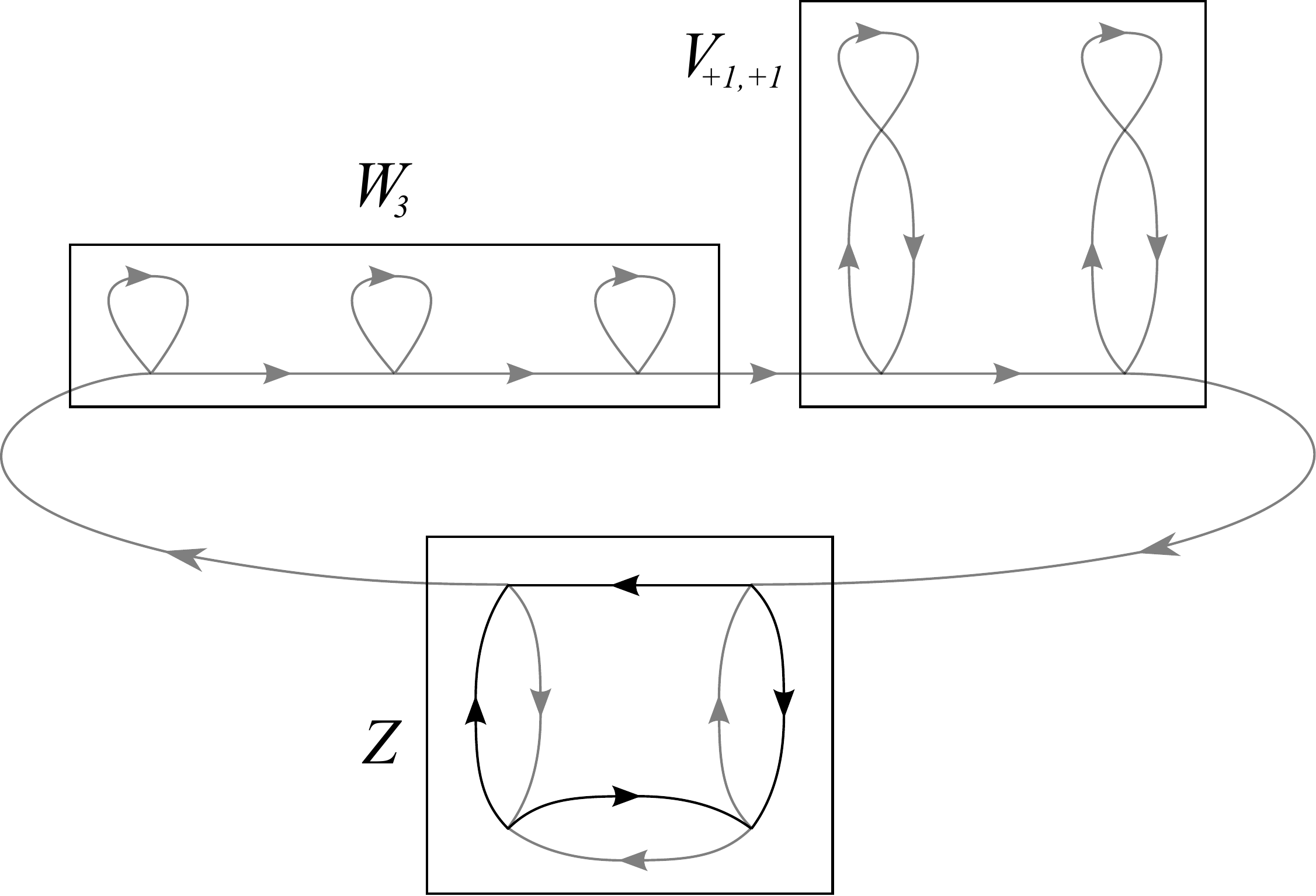}
\caption{The construction of $Y$ from $Z$ is illustrated.  The original edges of $Z$ are marked in black, and the added edges in grey.  The labels have been suppressed for clarity.}
\label{fig: Schematic}
\end{figure}

Finally, we will modify $Y'$ to force the action to be alternating.  For each $i$, consider the sign $s_i\in\{\pm 1\}$ of the action of $\alpha_i$ on the vertex set of $Y'$. Now construct $Y$ from $Y'$ by replacing $V_t$ by $V_s$.  As before, we see that the action of $F$ on the vertex set of $Y$ is symmetric or alternating, but the modification ensures that each generator $\alpha_i$ acts as an even permutation.  Therefore, the action of $F$ on the vertex set of $Y$ is alternating. 
\end{proof}

The construction of $Y$ is illustrated in Figure \ref{fig: Schematic}.  Note that, in order to ensure that the resulting action is symmetric rather than alternating, one can simply change the sequence $s$ in one place.  We therefore also have a symmetric version of the previous lemma.

\begin{lemma}\label{l: Symmetric completion}
Assume that $r\geq 2$.  Let $Z$ be a finite graph.   Any immersion $Z\to X$ that is not a covering map factors as $Z\hookrightarrow Y\to X$ where $Y\to X$ is a finite-sheeted covering map and the action of $F\cong\pi_1(X)$ on the vertex set of $Y$ is symmetric.
\end{lemma}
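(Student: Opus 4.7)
The plan is to mimic the proof of Lemma \ref{l: Alternating completion} essentially verbatim, changing only the final choice of the sign sequence $s=(s_i)$ so that the resulting action contains at least one odd permutation.

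First, I would run the construction of Lemma \ref{l: Alternating completion} unchanged up to the point where the auxiliary graph $Y'$ is built from $V_t\sqcup Z\sqcup W_n$ (with $t_i=+1$ for all $i$) by adjoining four edges labelled $\alpha_1$ so as to produce a connected graph satisfying the covering condition on a prime number $p$ of vertices. Exactly as in that proof, the action of $F$ on the vertex set of $Y'$ is transitive (by connectedness), primitive (since $p$ is prime), and has minimal degree at most $d+4\leq J(p)$ (since $\alpha_2$ fixes every vertex $w_j$ of $W_n$). Jordan's theorem (Theorem \ref{t: Jordan's theorem}) therefore guarantees that the image of $F$ in $\mathrm{Sym}(\text{vertices of }Y')$ is either $A_p$ or $S_p$.

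Next, for each $i$ let $\sigma_i\in\{\pm 1\}$ be the sign of the permutation by which $\alpha_i$ acts on the vertices of $Y'$. In the alternating proof one sets $s_i=\sigma_i$ so that swapping $V_t$ for $V_s$ flips the sign of $\alpha_i$ precisely when needed to cancel $\sigma_i$, making every generator act evenly. To force the action to be symmetric, I would instead fix a single index $i_0\in\{1,\ldots,r\}$ and set $s_{i_0}=-\sigma_{i_0}$ while retaining $s_i=\sigma_i$ for $i\neq i_0$. Building $Y$ from $V_s\sqcup Z\sqcup W_n$ by exactly the same four edges used for $Y'$ gives a covering of $X$ with the same prime vertex count $p$ and (since $V_s$ and $V_t$ have identical vertex set and identical attachments to the rest of the graph) the same minimal-degree bound, so Jordan again forces the image to be $A_p$ or $S_p$. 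However, by design every generator except $\alpha_{i_0}$ acts as an even permutation on the vertex set of $Y$, whereas $\alpha_{i_0}$ acts as an odd permutation. The image therefore contains an odd element and hence must be $S_p$.

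There is really no main obstacle: the hard technical work (realising the immersion condition at $a$ and $b$, inserting $W_n$ to produce small minimal degree, choosing $n$ so that $p=d+n+4$ is prime with $J(p)\geq d+4$, and verifying the covering condition after attaching the four $\alpha_1$-edges) was already carried out inside the proof of Lemma \ref{l: Alternating completion}, and none of it needs to be revisited. The only check required is the entirely local one that a single sign-flip in $s$ changes the parity of exactly one generator while preserving every other structural feature of the cover, which is immediate from the definition of $V_s$.
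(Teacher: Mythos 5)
Your proposal is correct and matches the paper's intent exactly: the paper dispatches this lemma with the single remark that one changes the sequence $s$ in one place, which is precisely your sign-flip at $i_0$ forcing an odd generator and hence image $S_p$. No further comment is needed.
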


Theorem \ref{t: Alternating quotients} follows from Lemma \ref{l: Alternating completion} in exactly the same way that Theorem \ref{t: Marshall Hall} follows from Lemma \ref{l: Completion}.  Likewise, Theorem \ref{t: Symmetric quotients} follows from Lemma \ref{l: Symmetric completion}.

\bibliographystyle{plain}

\end{document}